\title[Representation infinite quiver settings]{A characterization of representation infinite quiver settings}
\author{Grzegorz Bobi\'nski}
\address{Faculty of Mathematics and Computer Science \\ Nicolaus Copernicus University \\ ul. Chopina 12/18 \\ 87-100 Toru\'n \\ Poland}
\email{gregbob@mat.umk.pl}
\newcommand{\bbA}{\mathbb{A}}
\newcommand{\bbD}{\mathbb{D}}
\newcommand{\bbE}{\mathbb{E}}
\newcommand{\bbM}{\mathbb{M}}
\newcommand{\bbN}{\mathbb{N}}
\newcommand{\bbZ}{\mathbb{Z}}
\newcommand{\bd}{\mathbf{d}}
\newcommand{\be}{\mathbf{e}}
\newcommand{\bh}{\mathbf{h}}
\newcommand{\bw}{\mathbf{w}}
\newcommand{\calC}{\mathcal{C}}
\newcommand{\calI}{\mathcal{I}}
\newcommand{\calL}{\mathcal{L}}
\newcounter{claim}
\newtheorem{coro}[claim]{Corollary}
\newtheorem{prop}[claim]{Proposition}
\newtheorem{theo}[claim]{Theorem}
\theoremstyle{remark}
\DeclareMathOperator{\GL}{GL}
\DeclareMathOperator{\rad}{rad}
\DeclareMathOperator{\rep}{rep}
\newcommand{\ol}{\overline}
\begin{document}

\begin{abstract}
We characterize pairs $(Q, \bd)$ consisting of a quiver $Q$ and a dimension vector $\bd$, such that over a given algebraically closed field $k$ there are infinitely many representations of $Q$ of dimension vector $\bd$. We also present an application of this result to the study of algebras with finitely many orbits with respect to the action of (the double product) of the group of units.
\end{abstract}

\maketitle

\section{Introduction}

Throughout the paper $k$ is an algebraically closed field. A well-known theorem of Gabriel~\cite{Gabriel} states that the Euclidean quivers are the minimal representation infinite quivers, where a quiver is called representation infinite, if it has infinitely many indecomposable $k$-representations (up to isomorphism). One also shows that $Q$ is representation infinite if and only if there exists a dimension vector $\bd$ such that there are infinitely many representations of $Q$ of dimension vector $\bd$ (a stronger and more general version of this observation is a content of the famous second Brauer--Thrall conjecture, first proved by Bautista~\cite{Bautista}). The main result of the paper can be viewed as a refinement of Gabriel's theorem. Namely, we show that the pairs $(Q, \bh_Q)$, where $Q$ is a Euclidean quiver and $\bh_Q$ is the associated radical vector, are the minimal pairs $(Q', \bd)$ consisting of a quiver $Q'$ and a dimension vector $\bd$, such that there are infinitely many representations of $Q'$ of dimension vector $\bd$ (we call such pairs representation infinite quiver settings).

The proof of the result consists of two steps, which seem to be known before, but apparently have not been joined together. First, we use a result of Skowro\'nski and Zwara~\cite{SkowronskiZwara} (see Proposition~\ref{prop criterion}), which says that a pair $(Q, \bd)$ is a representation infinite quiver setting if and only if there exists a nonzero dimension vector $\bd' \leq \bd$, such that $q_Q (\bd') \leq 0$, where $q_Q$ is the associated Tits form. The second step is Proposition~\ref{prop reverse}, which states that if $q_Q (\bd) \leq 0$, for a nonzero dimension vector $\bd$, then there exists a Euclidean subquiver $Q'$ of $Q$ such that $\bh_{Q'} \leq \bd'$, where $\bd'$ is the restriction of $\bd$ to $Q'$. This latter result should be well-known (at least to the experts), however we could not spot it in the literature. In particular, it seems that it has never been used in the context of quiver representations. Consequently, we include its short proof for completeness.

The problem discussed in the paper has a geometric aspect. Namely, a pair $(Q, \bd)$ is a representation finite (i.e., not representation infinite) quiver setting if and only if there are only finitely many orbits (with respect to a natural action) in the variety $\rep_Q (\bd)$ of representations of $Q$ of dimension vector $\bd$. In particular, there is a dense orbit in $\rep_Q (\bd)$, hence for example Schofield's description of the ring of semi-invariants~\cite{Schofield} applies. Moreover, according to~\cite{SkowronskiZwara}*{Theorem~2} the degeneration order in $\rep_Q (\bd)$ coincides with the extension order.

Another source of interest in the problem comes from the study of algebras with finitely many orbits (see~\cite{OkninskiRenner}). Here we say that an algebra $A$ has finitely many orbits if there are only finitely many orbits in $A$ with respect to the action of $U (A) \times U (A)$ given by $(u, v) \ast a := u a v^{-1}$, where $U (A)$ is the group of units of $A$ (see Section~\ref{sect algebras} for some motivation for this problem). As we explain in Section~\ref{sect algebras}, if $\rad^2 (A) = 0$, then one may associate to $A$ a quiver setting $(\Delta_A, \bw_A)$ in such a way, that $A$ has finitely many orbits if and only if $(\Delta_A, \bw_A)$ is representation finite. Consequently, our main theorem gives a criterion for $A$ to have finitely many orbits. We compare our criterion with~\cite{OkninskiRenner}*{Theorem~10} at the end of the paper.

The author acknowledges the support of the National Science Center grant no.\ 2015/17/B/ST1/01731.

\section{Main result}

By a \emph{quiver} $Q$ we mean a finite set $Q_0$ of \emph{vertices} together with a finite set $Q_1$ of \emph{arrows} and two maps $s = s_Q, t = t_Q \colon Q_1 \to Q_0$, which assign to each arrow $\alpha \in Q_1$ its \emph{starting vertex} $s \alpha$ and \emph{terminating vertex} $t \alpha$. An arrow with the same starting and terminating vertex is called a \emph{loop}. By a walk in a quiver $Q$ we mean a sequence $(x_0, \ldots, x_n)$ of vertices such that, for each $1 \leq i \leq n$, $x_{i - 1}$ and $x_i$ are connected by an arrow (i.e.\ there exists an arrow $\alpha \in Q_1$ such that $\{ s \alpha, t \alpha \} = \{ x_{i - 1}, x_i \}$). A quiver $Q$ is \emph{connected} if for any vertices $x, y \in Q_0$ there exists a walk $(x_0, \ldots, x_n)$ in $Q$ such that $x_0 = x$ and $x_n = y$. A walk $(x_0, \ldots, x_n)$ in $Q$ is called a \emph{cycle} if $n > 0$, $x_{i - 1} \neq x_{i + 1}$ for each $0 < i < n$, and $x_0 = x_n$ (in particular, $n \neq 2$). We say that a quiver $Q$ has \emph{multiple arrows} if there exist arrows $\alpha \neq \beta$ which connect the same vertices, i.e.\ $\{ s \alpha, t \alpha \} = \{ s \beta, t \beta \}$.

By a \emph{dimension vector} for a quiver $Q$ we mean an element of $\bbN^{Q_0}$. A \emph{representation} of $Q$ of dimension vector $\bd$ is a tuple $M = (M_\alpha)_{\alpha \in Q_1}$ of linear maps $M_\alpha \colon k^{\bd (s \alpha)} \to k^{\bd (t \alpha)}$, $\alpha \in Q_1$. The set of such representations is an affine space and we denote it by $\rep_Q (\bd)$. Let $\GL (\bd)$ be the set of tuples $g = (g_x)_{x \in Q_0}$ such that $g_x \colon k^{\bd (x)} \to k^{\bd (x)}$ is a $k$-linear automorphism for each $x \in Q_0$. Two representations $M$ and $N$ of dimension vector $\bd$ are said to be \emph{isomorphic} if the exists a tuple $g \in \GL (\bd)$ such that $N_\alpha = g_{t \alpha} M_\alpha g_{s \alpha}^{-1}$ for each $\alpha \in Q_1$. Following~\cite{Bocklandt} we call a pair $(Q, \bd)$ consisting of a quiver $Q$ and a dimension vector $\bd$ a \emph{quiver setting}. A quiver setting $(Q, \bd)$ is called \emph{representation finite} if there are only finitely many (up to isomorphism) representations of $Q$ of dimension vector $\bd$. Equivalently, there are finitely many $\GL (\bd)$-orbits in $\rep_Q (\bd)$ (with respect to the action induced by the isomorphism formula). A quiver setting which is not representation finite is said to be \emph{representation infinite}.

Let $Q$ and $Q'$ be two quivers. By a \emph{quiver morphism} $\psi \colon Q \to Q'$ we mean a pair of functions $\psi_0 \colon Q_0 \to Q_0'$ and $\psi_1 \colon Q_1 \to Q_1'$ such that $s_{Q'} (\psi_1 \alpha) = \psi_0 (s_Q \alpha)$ and $t_{Q'} (\psi_1 \alpha) = \psi_0 (t_Q \alpha)$. A quiver morphism $\psi \colon Q \to Q'$ is called \emph{injective} if both $\psi_0$ and $\psi_1$ are injective. If this is the case, then $Q$ may be viewed as a subquiver of $Q'$ (if we identify $Q$ with its image via $\psi$).

If $\psi \colon Q \to Q'$ is a quiver morphism and $\bd$ is a dimension vector for $Q$, then one defines a dimension vector $\psi_* \bd$ for $Q'$ via $(\psi_* \bd) (y) := \sum_{x \in \psi_0^{-1} (y)} \bd (x)$, for $y \in Q_0'$. Dually, if $\bd'$ is the dimension vector for $Q'$, then $\psi^* \bd'$ is a dimension vector for $Q$ given by $(\psi^* \bd') (x) :=  \bd' (\psi_0 x)$, for $x \in Q_0$.

Given two quiver settings $(Q, \bd)$ and $(Q', \bd')$ we write $(Q, \bd) \leq (Q', \bd')$, if there exists an injective quiver morphism $\psi \colon Q \to Q'$ such that $\bd \leq \psi^* \bd'$. Since $\psi$ is injective, the inequality $\bd \leq \psi^* \bd'$ is equivalent to the inequality $\psi_* \bd \leq \bd'$. Thus the condition $(Q, \bd) \leq (Q', \bd')$ means that $Q$ can be identified with a subquiver of $Q'$ in such a way that $\bd \leq \bd'|_{Q_0'}$ (with respect to this identification). Obviously, the relation $\leq$ is only a preorder on the class of quiver settings.

Recall that $Q$ is a \emph{Euclidean quiver} if its \emph{underlying graph} (the graph obtained from $Q$ by forgetting the orientations of arrows) is one of the following diagrams:
\begin{align*}
& \xymatrix@R=1em{& & \bullet \ar@{-}[rrd] \ar@{-}[lld] \\ \bullet \ar@{-}[r] & \bullet \ar@{-}[r] & \cdots & \bullet \ar@{-}[l] & \bullet \ar@{-}[l] &
\text{$n + 1$ vertices, $n \geq 0$,}}
\\
& \vcenter{\xymatrix@R=1em{\bullet \ar@{-}[rd] & & & & & & \bullet \ar@{-}[ld] \\ & \bullet \ar@{-}[r] & \bullet \ar@{-}[r] & \cdots & \bullet \ar@{-}[l] & \bullet \ar@{-}[l] & \\ \bullet \ar@{-}[ru] & & & & & & \bullet \ar@{-}[lu]}} \text{$n + 1$ vertices, $n \geq 4$,}
\\
& \xymatrix@R=1em{& & \bullet \ar@{-}[d] \\ & & \bullet \ar@{-}[d] \\ \bullet \ar@{-}[r] & \bullet \ar@{-}[r] & \bullet & \bullet \ar@{-}[l] & \bullet \ar@{-}[l]}
\\
& \xymatrix@R=1em{& & & \bullet \ar@{-}[d] \\ \bullet \ar@{-}[r] & \bullet \ar@{-}[r] & \bullet \ar@{-}[r] & \bullet & \bullet \ar@{-}[l] & \bullet \ar@{-}[l] & \bullet \ar@{-}[l]}
\\
& \xymatrix@R=1em{& & \bullet \ar@{-}[d] \\ \bullet \ar@{-}[r] & \bullet \ar@{-}[r] & \bullet & \bullet \ar@{-}[l] & \bullet \ar@{-}[l] & \bullet \ar@{-}[l] & \bullet \ar@{-}[l] & \bullet \ar@{-}[l]}
\end{align*}
In the above cases we say that $Q$ is a Euclidean quiver of type $\tilde{\bbA}_n$ ($\tilde{\bbA}$ for short), $\tilde{\bbD}_n$ ($\tilde{\bbD}$ shortly), $\tilde{\bbE}_6$, $\tilde{\bbE}_7$, and $\tilde{\bbE}_8$, respectively.

For each Euclidean quiver $Q$ there is a distinguished dimension vector $\bh_Q$ defined as follows:
\[
\bh_Q :=
\begin{cases}
\begin{smallmatrix}
& & 1 \\ 1 & 1 & \cdots & 1 & 1
\end{smallmatrix}
& \text{if $Q$ is of type $\tilde{\bbA}$},
\\
\begin{smallmatrix}
\genfrac{}{}{0pt}{1}{1}{1} & 2 & 2 & \cdots & 2 & 2 & \genfrac{}{}{0pt}{1}{1}{1}
\end{smallmatrix}
& \text{if $Q$ is of type $\tilde{\bbD}$},
\\
\begin{smallmatrix}
& & 1 & \\ & & 2 & \\ 1 & 2 & 3 & 2 & 1
\end{smallmatrix}
& \text{if $Q$ is of type $\tilde{\bbE}_6$},
\\
\begin{smallmatrix}
& & & 2 \\ 1 & 2 & 3 & 4 & 3 & 2 & 1
\end{smallmatrix}
& \text{if $Q$ is of type $\tilde{\bbE}_7$},
\\
\begin{smallmatrix}
& & 3 \\ 2 & 4 & 6 & 5 & 4 & 3 & 2 & 1
\end{smallmatrix}
& \text{if $Q$ is of type $\tilde{\bbE}_8$}.
\end{cases}
\]
The main theorem of the paper is the following.

\begin{theo} \label{theo main}
A quiver setting $(Q, \bd)$ is representation infinite if and only if there exists a Euclidean quiver $Q'$ such that $(Q', \bh_Q') \leq (Q, \bd)$.
In other words, $(Q, \bd)$ is representation infinite if and only if $Q$ has a Euclidean subquiver $Q'$ such that $\bh_{Q'} \leq \bd |_{Q_0'}$.
\end{theo}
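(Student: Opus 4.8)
The plan is to derive the theorem by combining the two propositions announced in the introduction: the Skowro\'nski--Zwara criterion (Proposition~\ref{prop criterion}), that $(Q, \bd)$ is representation infinite precisely when some nonzero $\bd' \leq \bd$ satisfies $q_Q(\bd') \leq 0$, and its converse (Proposition~\ref{prop reverse}), that $q_Q(\bd) \leq 0$ for a nonzero $\bd$ forces a Euclidean subquiver $Q'$ with $\bh_{Q'} \leq \bd|_{Q_0'}$. Throughout I would use the Tits form $q_Q(\be) = \sum_{x \in Q_0} \be(x)^2 - \sum_{\alpha \in Q_1} \be(s\alpha)\,\be(t\alpha)$.

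For the forward implication I would argue as follows. Assume $(Q, \bd)$ is representation infinite. By Proposition~\ref{prop criterion} there is a nonzero dimension vector $\bd' \leq \bd$ with $q_Q(\bd') \leq 0$. Proposition~\ref{prop reverse} then produces a Euclidean subquiver $Q'$ of $Q$ with $\bh_{Q'} \leq \bd'|_{Q_0'}$. Since $\bd' \leq \bd$ gives $\bd'|_{Q_0'} \leq \bd|_{Q_0'}$, we obtain $\bh_{Q'} \leq \bd|_{Q_0'}$, i.e.\ $(Q', \bh_{Q'}) \leq (Q, \bd)$, which is exactly what is wanted. This direction is essentially immediate once both propositions are in hand.

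For the backward implication, suppose $(Q', \bh_{Q'}) \leq (Q, \bd)$, realized by an injective quiver morphism $\psi \colon Q' \to Q$ with $\psi_* \bh_{Q'} \leq \bd$. Put $\be := \psi_* \bh_{Q'}$. Because $\psi_0$ is injective, each fibre $\psi_0^{-1}(y)$ has at most one element, so $\be$ is merely $\bh_{Q'}$ transported to $Q$ and extended by zero; in particular $\be$ is nonzero and $\be \leq \bd$. I would then compare $q_Q(\be)$ with $q_{Q'}(\bh_{Q'})$. The vertex sums agree since $\psi_0$ is injective, and the arrows of $\psi_1(Q_1')$ reproduce exactly the arrow terms of $q_{Q'}$ (as $\psi_1$ is injective); every further arrow $\beta$ of $Q$ with both endpoints in the image of $\psi_0$ contributes a \emph{nonnegative} quantity $\be(s\beta)\,\be(t\beta)$ that is \emph{subtracted}, while all remaining arrows contribute zero. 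Hence $q_Q(\be) \leq q_{Q'}(\bh_{Q'})$. Since $Q'$ is Euclidean, its Tits form is positive semidefinite with radical spanned by $\bh_{Q'}$, so $q_{Q'}(\bh_{Q'}) = 0$; therefore $q_Q(\be) \leq 0$. Applying Proposition~\ref{prop criterion} to the nonzero vector $\be \leq \bd$ shows that $(Q, \bd)$ is representation infinite.

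I expect the genuine difficulty of the whole development to lie in Proposition~\ref{prop reverse}, not in the theorem itself, which reduces to bookkeeping once that proposition is available. Within the proof above, the one point that needs care is the Tits-form comparison in the backward direction: it hinges on the dimension vector being nonnegative, so that the extra subtracted arrow terms can only decrease the value, and on the classical identity $q_{Q'}(\bh_{Q'}) = 0$ for Euclidean quivers. Note that no monotonicity of representation infiniteness under $\leq$ is invoked directly; instead everything is routed through the numerical criterion, which keeps the argument uniform for both directions.
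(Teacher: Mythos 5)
Your proof is correct, and its hard direction coincides with the paper's: representation infiniteness yields (via Proposition~\ref{prop criterion}) a nonzero $\bd' \leq \bd$ with $q_Q (\bd') \leq 0$, and Proposition~\ref{prop reverse} then produces the Euclidean subquiver with $\bh_{Q'} \leq \bd'|_{Q_0'} \leq \bd|_{Q_0'}$; this is exactly the paper's Part~II. Where you diverge is the converse direction. The paper reduces it to showing that $(Q', \bh_{Q'})$ itself is representation infinite for every Euclidean quiver $Q'$ --- a reduction that silently uses monotonicity of representation infiniteness along the order $\leq$ (if $(Q', \bd') \leq (Q, \bd)$ and the smaller setting is representation infinite, so is the larger; this is standard, e.g.\ by padding representations with a fixed semisimple complement and invoking Krull--Schmidt) --- and then settles the Euclidean case either by classical representation theory of Euclidean quivers or by Proposition~\ref{prop criterion} together with $q_{Q'} (\bh_{Q'}) = 0$. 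You instead transport $\bh_{Q'}$ to the vector $\be := \psi_* \bh_{Q'}$ on $Q$, note that injectivity of $\psi$ makes the vertex terms and the image-arrow terms of $q_Q (\be)$ reproduce those of $q_{Q'} (\bh_{Q'})$ while every additional arrow of $Q$ only subtracts a nonnegative quantity, conclude $q_Q (\be) \leq q_{Q'} (\bh_{Q'}) = 0$, and apply Proposition~\ref{prop criterion} directly to $(Q, \bd)$. The two routes are of comparable length, but yours buys genuine self-containedness: it never needs the monotonicity fact, so the entire theorem rests only on the numerical criterion plus two quadratic-form computations, whereas the paper leaves that (standard, but not entirely trivial) reduction step to the reader.
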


We have the following reformulation of Theorem~\ref{theo main}, which can be seen as a refinement of Gabriel's Theorem.

\begin{coro}
The quiver settings $(Q, \bh_Q)$, where $Q$ is a Euclidean quiver, are precisely the minimal representation infinite quiver settings.
\end{coro}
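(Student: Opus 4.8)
The plan is to deduce the corollary from Theorem~\ref{theo main} together with a single combinatorial lemma asserting that distinct Euclidean radical settings are $\leq$-incomparable. Recall that $\leq$ is only a preorder, so ``the minimal representation infinite quiver settings'' should be read up to the equivalence $(Q, \bd) \sim (Q', \bd')$ meaning $(Q, \bd) \leq (Q', \bd') \leq (Q, \bd)$; I call $(Q, \bd)$ \emph{minimal} if it is representation infinite and every representation infinite $(Q', \bd') \leq (Q, \bd)$ satisfies $(Q, \bd) \leq (Q', \bd')$. The key Lemma I would isolate is: if $Q'$ and $Q''$ are Euclidean and $(Q'', \bh_{Q''}) \leq (Q', \bh_{Q'})$, then also $(Q', \bh_{Q'}) \leq (Q'', \bh_{Q''})$; in fact the witnessing injective morphism is forced to be an isomorphism.

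Granting the Lemma, both inclusions are short. For minimality of a given $(Q, \bh_Q)$: it is representation infinite by Theorem~\ref{theo main} (take the identity embedding, as $(Q, \bh_Q) \leq (Q, \bh_Q)$); and if $(Q', \bd')$ is representation infinite with $(Q', \bd') \leq (Q, \bh_Q)$, then Theorem~\ref{theo main} yields a Euclidean $Q''$ with $(Q'', \bh_{Q''}) \leq (Q', \bd') \leq (Q, \bh_Q)$, whence the Lemma gives $(Q, \bh_Q) \leq (Q'', \bh_{Q''}) \leq (Q', \bd')$. Conversely, if $(Q', \bd')$ is any minimal representation infinite setting, Theorem~\ref{theo main} provides a Euclidean $Q''$ with $(Q'', \bh_{Q''}) \leq (Q', \bd')$; since $(Q'', \bh_{Q''})$ is itself representation infinite, minimality of $(Q', \bd')$ forces $(Q', \bd') \leq (Q'', \bh_{Q''})$, so the two are equivalent. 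Thus the minimal settings are exactly the $(Q, \bh_Q)$ up to $\sim$.

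The main obstacle is the Lemma, which I would prove using two standard facts about a Euclidean quiver $Q$: the Tits form $q_Q$ is positive semidefinite with radical $\bbZ \bh_Q$, and $\bh_Q$ is sincere. Identify $Q''$ with its image subquiver of $Q'$ via the injective morphism $\psi$, and let $\hat{\bh}$ be the dimension vector on $Q'$ obtained by extending $\bh_{Q''}$ by zero outside $Q''_0$; by injectivity $\hat{\bh} = \psi_* \bh_{Q''} \leq \bh_{Q'}$. A direct computation gives $q_{Q'}(\hat{\bh}) = q_{Q''}(\bh_{Q''}) - \sum_{\alpha \in F} \bh_{Q''}(s\alpha) \bh_{Q''}(t\alpha)$, where $F$ is the set of arrows of $Q'$ joining two vertices of $Q''_0$ but not lying in $\psi(Q''_1)$. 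Since $q_{Q''}(\bh_{Q''}) = 0$ and each term of the $F$-sum is strictly positive (sincerity), the right-hand side is $\leq 0$; but $q_{Q'}$ is positive semidefinite, so $q_{Q'}(\hat{\bh}) = 0$ and $F = \emptyset$. Equality forces $\hat{\bh} \in \bbZ \bh_{Q'}$, and since $\hat{\bh}$ vanishes on $Q'_0 \setminus Q''_0$ while $\bh_{Q'}$ is sincere, this set must be empty; thus $\psi_0$ is bijective, and then $F = \emptyset$ makes $\psi_1$ bijective as well. Hence $\psi$ is an isomorphism and $\psi_* \bh_{Q''} = \bh_{Q'}$, giving $(Q', \bh_{Q'}) \leq (Q'', \bh_{Q''})$. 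I expect the only real work to be this Tits-form identity and the bookkeeping of the arrow set $F$; everything else is a formal consequence of Theorem~\ref{theo main}.
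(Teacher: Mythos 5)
Your proposal is correct, and it supplies an argument that the paper itself does not give: the corollary is stated there purely as a ``reformulation'' of Theorem~\ref{theo main}, with no proof at all. The formal half of the deduction (every representation infinite setting lies above some $(Q, \bh_Q)$, and a minimal one is therefore equivalent to it) is indeed immediate from the theorem, exactly as in your second paragraph; but you correctly identify that minimality of each $(Q, \bh_Q)$ is \emph{not} formal --- it requires knowing that no Euclidean radical setting lies strictly below another, which is precisely your Lemma, and which the paper silently takes for granted. Your Tits-form proof of that Lemma is sound: with $\hat{\bh} = \psi_* \bh_{Q''}$ one gets $q_{Q'}(\hat{\bh}) = q_{Q''}(\bh_{Q''}) - \sum_{\alpha \in F} \hat{\bh}(s\alpha)\hat{\bh}(t\alpha) \leq 0$, and then positive semidefiniteness of $q_{Q'}$, the fact that its radical is $\bbZ \bh_{Q'}$, and sincerity of $\bh_{Q'}$ force $F = \emptyset$, $\psi_0$ and $\psi_1$ bijective, and $\psi_* \bh_{Q''} = \bh_{Q'}$. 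Two features of your route are worth noting. First, it is uniform over all Euclidean types, including the degenerate cases $\tilde{\bbA}_0$ (a loop) and $\tilde{\bbA}_1$ (multiple arrows), whereas the obvious alternative --- a case check that no Euclidean diagram embeds properly into another compatibly with the radical vectors --- would need separate inspection of the diagrams. Second, it uses exactly the toolkit of the paper's own proof of Proposition~\ref{prop reverse} (value of the form $\leq 0$, hence $= 0$, hence radical membership, hence a multiple of the sincere vector $\bh$), so it is very much in the paper's spirit even though the paper never isolates the incomparability statement. One point you leave implicit but which is harmless: reading minimality up to the equivalence $\sim$ is in fact no loss, since $\sim$ coincides with isomorphism of quiver settings --- if $(Q, \bd) \leq (Q', \bd') \leq (Q, \bd)$, the two injective morphisms compose to an injective endomorphism of a finite quiver, hence an automorphism, and the chain of inequalities on dimension vectors then collapses to equalities --- so your conclusion also gives the corollary in its literal reading.
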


An important role in the proof of the above theorem is played by the Euler form $q_Q$, which is the quadratic form $q_Q \colon \bbZ^{Q_0} \to \bbZ$ defined by
\[
q_Q (\bd) := \sum_{x \in Q_0} \bd (x)^2 - \sum_{\alpha \in Q_1} \bd (s \alpha) \bd (t \alpha) \qquad (\bd \in \bbZ^{Q_0}).
\]
The following fact is proved in~\cite{SkowronskiZwara}*{Section~3}.

\begin{prop} \label{prop criterion}
A quiver setting $(Q, \bd)$ is representation infinite if and only there exists a nonzero dimension vector $\bd'$ for $Q$ such that $\bd' \leq \bd$ and $q_Q (\bd') \leq 0$. \qed
\end{prop}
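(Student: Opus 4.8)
The plan is to prove both implications through the geometry of the $\GL(\bd)$-action on $\rep_Q(\bd)$, the central tool being the orbit-codimension formula. First I would record the two identities that follow at once from the definitions and from Ringel's formula for the homological Euler form of a hereditary category: for any representation $M$ of dimension vector $\be$ one has $\dim \rep_Q(\be) = \dim \GL(\be) - q_Q(\be)$ and $q_Q(\be) = \dim \operatorname{End}(M) - \dim \operatorname{Ext}^1(M,M)$. Since the stabiliser of $M$ in $\GL(\be)$ is the open subset of invertible endomorphisms, $\dim \GL(\be) \cdot M = \dim \GL(\be) - \dim \operatorname{End}(M)$, and subtracting from $\dim \rep_Q(\be)$ gives the key formula $\operatorname{codim}\bigl(\GL(\be) \cdot M\bigr) = \dim \operatorname{Ext}^1(M,M)$.

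For the implication ``$\Leftarrow$'' suppose $0 \neq \bd' \leq \bd$ satisfies $q_Q(\bd') \leq 0$. For every $M \in \rep_Q(\bd')$ the formula gives $\operatorname{codim}\bigl(\GL(\bd') \cdot M\bigr) = \dim \operatorname{End}(M) - q_Q(\bd') \geq 1$, because $\operatorname{id}_M$ forces $\dim \operatorname{End}(M) \geq 1$. Thus no orbit is dense in the irreducible variety $\rep_Q(\bd')$; were there only finitely many orbits, one of their closures would exhaust $\rep_Q(\bd')$, and that orbit, being open in its closure, would have codimension $0$ --- a contradiction. Hence $(Q, \bd')$ is representation infinite. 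Finally I would inflate to $\bd$: fixing any representation $S$ of dimension $\bd - \bd'$ and using Krull--Schmidt cancellation, the assignment $M \mapsto M \oplus S$ turns infinitely many pairwise non-isomorphic representations of dimension $\bd'$ into infinitely many of dimension $\bd$, so $(Q, \bd)$ is representation infinite.

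For the implication ``$\Rightarrow$'' I would argue by contraposition: assuming $q_Q(\bd') \geq 1$ for every nonzero $\bd' \leq \bd$, I want finitely many isomorphism classes in dimension $\bd$. By Krull--Schmidt every representation of dimension $\bd$ is a direct sum of indecomposables of dimension $\leq \bd$, and there are only finitely many ways to write $\bd$ as such a sum; so it is enough to bound, for each $\be \leq \bd$, the number of parameters $\mu(\operatorname{Ind}_Q(\be))$ of the constructible set of indecomposable representations, together with the additivity $\mu(\rep_Q(\bd)) = \max \sum_i \mu(\operatorname{Ind}_Q(\be_i))$ over decompositions $\bd = \sum_i \be_i$. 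The decisive input is the estimate $\mu(\operatorname{Ind}_Q(\be)) \leq 1 - q_Q(\be)$; under the hypothesis its right-hand side is $\leq 0$, forcing $\mu(\operatorname{Ind}_Q(\be)) = 0$ and hence finitely many orbits in $\rep_Q(\bd)$.

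The main obstacle is exactly this last estimate. The crude codimension bound only yields $\mu(\operatorname{Ind}_Q(\be)) \leq \max_N \dim \operatorname{Ext}^1(N,N) = \max_N \bigl(\dim \operatorname{End}(N) - q_Q(\be)\bigr)$, which is too weak once the generic indecomposable $N$ fails to be a brick. What is really needed is that the family of indecomposables of a given dimension carries at most $1 - q_Q(\be)$ parameters --- equivalently, Kac's theorem that the dimension vectors of indecomposables are the positive roots of $q_Q$, real roots ($q_Q = 1$) supporting a unique indecomposable and imaginary roots (which have $q_Q \leq 0$ and are thus excluded by the hypothesis) supporting the genuine families. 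Establishing this bound, rather than the elementary codimension count, is where the real work of the proposition lies.
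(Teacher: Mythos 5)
Your proof is correct, but there is nothing in the paper to compare it against: the proposition is stated as a quoted fact, with its proof deferred entirely to Section~3 of \cite{SkowronskiZwara}, so your write-up is in effect a reconstruction of that cited argument. It splits into two parts of very different depth, and it is worth being explicit about this. The ``if'' direction is complete and elementary: from $\dim \rep_Q(\be) = \dim \GL(\be) - q_Q(\be)$ and $\dim \bigl( \GL(\bd') \cdot M \bigr) = \dim \GL(\bd') - \dim \operatorname{End}(M)$ you get $\operatorname{codim} \bigl( \GL(\bd') \cdot M \bigr) = \dim \operatorname{End}(M) - q_Q(\bd') \geq 1$ for every $M \in \rep_Q(\bd')$, so no orbit is dense in the irreducible affine space $\rep_Q(\bd')$, while finitely many orbits would force a dense one; Krull--Schmidt cancellation applied to $M \mapsto M \oplus S$ then transports infinitely many isomorphism classes from $\bd'$ up to $\bd$. (This inflation step is also precisely what justifies the paper's unproved reduction, in Part~I of the proof of Theorem~\ref{theo main}, to the settings $(Q', \bh_{Q'})$.) The ``only if'' direction, by contrast, is not a proof but a correct reduction to Kac's theorem: dimension vectors of indecomposables are exactly the positive roots of $q_Q$, real roots carry a unique indecomposable, and imaginary roots have $q_Q \leq 0$, hence are excluded under the contrapositive hypothesis, leaving finitely many indecomposables in each dimension $\be \leq \bd$ and finitely many decomposition patterns of $\bd$. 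You are right, and honest, that the naive bound $\operatorname{codim} = \dim \operatorname{Ext}^1(N,N)$ cannot close this direction because the generic indecomposable need not be a brick, and that Kac's theorem is the genuine content here. Given that the paper itself uses the whole proposition as a black box, invoking Kac's theorem as a black box for one implication is perfectly acceptable; just present it as a citation rather than as ``remaining work''.
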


\begin{proof}[Proof of Theorem~\ref{theo main}, Part~I]
We first prove that if there exists a Euclidean quiver $Q'$ such that $(Q', \bh_Q') \leq (Q, \bd)$, then $(Q, \bd)$ is representation infinite. In fact, it is enough to prove that $(Q, \bh_Q)$ is representation infinite for each Euclidean quiver $Q$. This follows from well-known representation theory of Euclidean quivers (see for example~\cite{Ringel}*{Section~3.6}), but we may also refer to Proposition~\ref{prop criterion}, since $q_Q (\bh_Q) = 0$ as one easily checks.
\end{proof}

Before we continue the proof, we need some more notation. Let $(-, -) = (-, -)_Q \colon \bbZ^{Q_0} \times \bbZ^{Q_0} \to \bbZ$ be the symmetric bilinear form associated with $q_Q$, i.e.
\[
(\bd, \bd') := q_Q (\bd + \bd') - q_Q (\bd) - q_Q (\bd') \qquad (\bd, \bd' \in \bbZ^{Q_0}).
\]
In particular,
\[
q_Q (\bd) = \tfrac{1}{2} (\bd, \bd) = \tfrac{1}{2} \sum_{x \in Q_0} \bd (x) (\bd, \be_x),
\]
where $\be_x$, $x \in Q_0$, are the standard basis vectors.

We thank Daniel Simson for a hint, which allowed to significantly shorten the proof of the next result. In particular, the proof of the equality $q_Q (\bd) = 0$ below follows arguments in the proof of~\cite{SimsonSkowronski}*{Theorem~XIV.1.3}.

\begin{prop} \label{prop reverse}
Let $(Q, \bd)$ be a quiver setting such that $\bd$ is nonzero and $q_Q (\bd) \leq 0$. Then there exists a Euclidean subquiver $Q'$ of $Q$ such that $\bh_{Q'} \leq \bd |_{Q_0'}$.
\end{prop}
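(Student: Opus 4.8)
The plan is to reduce $(Q,\bd)$ to a small ``core'' situation and then read off the radical vector from the Euler form. Throughout, passing to a subquiver and replacing $\bd$ by a smaller dimension vector only strengthens the conclusion, so at each step it suffices to treat the reduced data. First I would pass to the support: replacing $Q$ by the full subquiver on $\{x : \bd(x) > 0\}$ leaves $q_Q(\bd)$ unchanged, since the discarded vertices and arrows contribute $0$. Now $\bd$ is sincere, and as $q_Q$ is additive over connected components, some component carries a nonzero restriction with value $\leq 0$; thus I may assume $Q$ is connected and $\bd$ sincere. If $Q$ has a loop at a vertex $x$, then the loop is a Euclidean subquiver of type $\tilde{\bbA}_0$ with radical vector $1 \leq \bd(x)$, and if $Q$ has two arrows joining $x$ and $y$, then these form a subquiver of type $\tilde{\bbA}_1$ with radical vector $(1,1) \leq (\bd(x),\bd(y))$; in either case we are done. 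Hence I may assume the underlying graph of $Q$ is a simple graph.

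Next I would make $\bd$ minimal: among all $\be$ with $0 \neq \be \leq \bd$ and $q_Q(\be) \leq 0$, choose one minimal in the componentwise order, and restrict again to its (connected, sincere) support. Minimality forces, for each vertex $x$, that $\bd - \be_x$ fails to be such a witness, so $q_Q(\bd - \be_x) \geq 1$; expanding $q_Q(\bd - \be_x) = q_Q(\bd) - (\bd, \be_x) + 1$ yields the key inequality $(\bd, \be_x) \leq q_Q(\bd) \leq 0$ for every vertex $x$.

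The central step, and the one I expect to be the main obstacle, is to upgrade this to $q_Q(\bd) = 0$ together with $(\bd, \be_x) = 0$ for all $x$, i.e.\ to show that $\bd$ lies in the radical of the symmetric bilinear form. Given the vanishing of $q_Q(\bd)$, the identity $2 q_Q(\bd) = \sum_x \bd(x) (\bd, \be_x)$ with all $\bd(x) > 0$ and all $(\bd, \be_x) \leq 0$ instantly forces each $(\bd, \be_x) = 0$; the real difficulty is excluding $q_Q(\bd) < 0$. A weighted average of the above inequalities shows that some $(\bd, \be_x)$ is at least $2 q_Q(\bd) / \sum_y \bd(y) \geq q_Q(\bd)$, which is tantalisingly close but leaves genuine boundary cases. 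Ruling these out, by exhibiting a strictly smaller witness whenever $q_Q(\bd) < 0$ and so contradicting minimality, is exactly the delicate combinatorial argument I would import from the proof of~\cite{SimsonSkowronski}*{Theorem~XIV.1.3}.

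Once $\bd$ is known to be a sincere radical vector, the conclusion follows cleanly. The relations $(\bd, \be_x) = 0$ read $2 \bd(x) = \sum_{y \sim x} \bd(y)$, and substituting them into a direct computation gives, for every $\bw$,
\[
q_Q(\bw) = \tfrac{1}{2} \sum_{\{x,y\}} \bd(x) \bd(y) \Bigl( \tfrac{\bw(x)}{\bd(x)} - \tfrac{\bw(y)}{\bd(y)} \Bigr)^2 \geq 0,
\]
the sum running over the edges of $Q$. Hence $q_Q$ is positive semidefinite with radical exactly $\mathbb{R}\bd$, so the matrix $2I - A$ ($A$ the adjacency matrix of $Q$) is of affine type; by the standard classification of connected nonnegative unit forms this means $Q$ is Euclidean with radical $\bbZ \bh_Q$. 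Therefore $\bd = c\,\bh_Q$ for an integer $c \geq 1$, whence $\bh_Q \leq \bd$, and taking $Q' = Q$ (a subquiver of the original quiver, with $\bh_{Q'} \leq \bd|_{Q_0'}$ transferring back since the reductions only decreased $\bd$) completes the argument.
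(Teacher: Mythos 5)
Your proposal follows the same route as the paper: reduce to a connected, sincere setting without loops or multiple arrows, establish $(\bd, \be_x) \leq q_Q(\bd)$ for every vertex (your minimal-witness choice and the paper's induction are the same device), conclude that $\bd$ is a sincere radical vector, and finish by the classification of connected non-negative unit forms. But there is a genuine gap exactly where you announce ``the main obstacle'': you never prove that $q_Q(\bd) < 0$ is impossible. You only state that you would ``import'' a delicate combinatorial argument from the proof of \cite{SimsonSkowronski}*{Theorem~XIV.1.3}; no such argument is supplied, and everything after this point (the semidefiniteness identity, the classification step) depends on it.

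What you are missing is not delicate: it is the observation, which the paper records explicitly, that after the loop and multiple-arrow reductions one has $|Q_0| \geq 3$. Indeed, a connected quiver with simple underlying graph on one vertex has $q_Q(\bd) = \bd(x)^2 > 0$, and on two vertices $q_Q(\bd) = \bd(x)^2 - \bd(x)\bd(y) + \bd(y)^2 > 0$; so $q_Q(\bd) \leq 0$ forces at least three vertices, whence $\sum_{x \in Q_0} \bd(x) \geq 3$ by sincerity. Now your own inequalities finish the job: summing $(\bd, \be_x) \leq q_Q(\bd)$ with weights $\bd(x)$ gives
\[
2 q_Q(\bd) = \sum_{x \in Q_0} \bd(x) (\bd, \be_x) \leq q_Q(\bd) \sum_{x \in Q_0} \bd(x),
\]
and if $q_Q(\bd) < 0$ this yields $\sum_{x \in Q_0} \bd(x) \leq 2$, a contradiction. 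Your ``genuine boundary cases'' are precisely $\sum_{x \in Q_0} \bd(x) \leq 2$, and they were already killed by the reductions you performed; the citation of Simson--Skowro\'nski in the paper acknowledges the idea, but the paper writes this short computation out in full rather than citing it as a black box. Aside from this, your final step is sound and in fact slightly more self-contained than the paper's: your weighted-square identity proves positive semidefiniteness with radical $\mathbb{R}\bd$ directly, where the paper instead cites Vinberg's characterization and \cite{AssemSimsonSkowronski}*{Lemma~VII.4.2} to get that $Q$ is Euclidean and $\bd = m \bh_Q$.
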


\begin{proof}
Obviously we may assume that $Q$ is connected and $\bd$ is \emph{sincere}, i.e.\ $\bd (x) \neq 0$ for each $x \in Q_0$. If there are either loops or multiple arrows in $Q$, then one easily sees that there is a Euclidean subquiver $Q'$ of $Q$ of type $\tilde{\bbA}$ such that $\bh_{Q'} \leq \bd |_{Q_0'}$. Thus for the rest of the proof we assume that $Q$ has neither loops nor multiple arrows. In particular, this implies that $|Q_0| \geq 3$. Indeed, if $Q_0 = \{ x \}$, then $q_Q (\bd) = \bd (x)^2 > 0$, while if $Q_0 = \{ x, y \}$, then
\[
q_Q (\bd) = \bd (x)^2 - \bd (x) \bd (y) + \bd (y)^2 = \tfrac{1}{2} [\bd (x)^2 + (\bd (x) - \bd (y))^2 + \bd (y)^2] > 0.
\]

We observe that we may assume $(\bd, \be_x) \leq q_Q (\bd)$ for each $x \in Q_0$. Indeed, if $(\bd, \be_x) \geq q_Q (\bd) + 1$ for some $x \in Q_0$, then
\[
(\bd - \be_x, \bd - \be_x) = q_Q (\bd) - (\bd, \be_x) + q_Q (\be_x) \leq q_Q (\bd) - (q_Q (\bd) + 1) + 1 \leq 0,
\]
hence we may replace $\bd$ by $\bd - \be_x$ and proceed by induction. Using the above assumption we get
\[
q_Q (\bd) = \tfrac{1}{2} \sum_{x \in Q_0} \bd (x) (\bd, \be_x) \leq \tfrac{1}{2} q_Q (\bd) \sum_{x \in Q_0} \bd (x).
\]
Consequently, if $q_Q (\bd) < 0$, then $\sum_{x \in Q_0} \bd (x) \leq 2$, which contradicts the inequality $|Q_0| \geq 3$, since $\sum_{x \in Q_0} \bd (x) \geq |Q_0|$. We conclude $q_Q (\bd) = 0$. Thus,
\[
0 = q_Q (\bd) \geq (\bd, \be_x) \geq \sum_{y \in Q_0} \bd (y) (\bd, \be_y) = 2 q_Q (\bd) = 0,
\]
i.e., $(\bd, \be_x) = 0$ for each $x \in Q_0$. From the famous Vinberg's characterization of Euclidean quivers (see for example~\cite{Kac}*{Corollary~4.3}), this implies that $Q$ is a Euclidean quiver. Moreover, another well-known property of Euclidean quivers (see for example~\cite{AssemSimsonSkowronski}*{Lemma~VII.4.2}) implies that $\bd = m \bh_Q$, for some $m > 0$, thus in particular $\bd \geq \bh_Q$.
\end{proof}

We finish now the proof of Theorem~\ref{theo main}.

\begin{proof}[Proof of Theorem~\ref{theo main}, Part~II]
Assume that $(Q, \bd)$ is a representation infinite quiver setting. We know from Proposition~\ref{prop criterion} that there exists a nonzero dimension vector $\bd'$ for $Q$ such that $\bd' \leq \bd$ and $q_Q (\bd') \leq 0$. Consequently, Proposition~\ref{prop reverse} implies that there exists a Euclidean subquiver $Q'$ of $Q$ such that $\bh_{Q'} \leq \bd' |_{Q_0'} \leq \bd |_{Q_0'}$.
\end{proof}

\section{An application to the algebras with finitely many orbits} \label{sect algebras}

For a $k$-algebra $A$ we denote by $U (A)$ the group of units of $A$. By abuse of notation (and following~\cite{OkninskiRenner}), by an \emph{$U (A)$-orbit in $A$} we mean an orbit with respect to the action of $U (A) \times U (A)$ on $A$ given by $(u, v) \ast a := u a v^{-1}$.  In~\cite{OkninskiRenner} the authors study the algebras with finitely many $U (A)$-orbits. In order to put this study in a wider context, we introduce some notation.

First, for an algebra $A$, $\calI (A)$ and $\calL (A)$ denote the lattices of ideals and left ideals, respectively. The group $U (A)$ acts on $\calL (A)$ by right translations and we denote by $\calC (A)$ the orbit space. Then $\calC (A)$ is a semigroup, with the product given by $[I] [J] := [I J]$, where $I J$ is the linear subspace of $A$ spanned by the products $a b$, $a \in I$, $b \in J$. The following theorem is one source of motivation for studying algebras with finitely many orbits.

\begin{theo}[\cite{OkninskiRenner}*{Theorem~6}] \label{theoOR}
Consider the following conditions for a finite dimensional $k$-algebra $A$.
\begin{enumerate}

\item
$A$ is of finite representation type.

\item
$\calC (A)$ is finite.

\item
$A$ has finitely many $U (A)$-orbits.

\item
$\calI (A)$ is a distributive lattice.

\end{enumerate}
Then $(1) \implies (2) \implies (3) \implies (4)$.
\end{theo}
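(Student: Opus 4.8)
The plan is to treat the three implications separately, since they rely on quite different ideas: the first is geometric, the second combinatorial/representation-theoretic, and the third lattice-theoretic.

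For $(1) \Rightarrow (2)$, the first step is to reinterpret $\calC (A)$ representation-theoretically. Right translation by a unit $u$ is the map $\rho_u \colon a \mapsto a u$, which is an automorphism of the left module ${}_A A$, and since $\operatorname{End}_A ({}_A A) \cong A^{\mathrm{op}}$ every automorphism of ${}_A A$ arises in this way. Hence $\calC (A)$ is precisely the set of $\operatorname{Aut}_A ({}_A A)$-orbits of submodules of ${}_A A$. The submodules of a fixed dimension form a projective variety (a submodule Grassmannian) on which the connected algebraic group $U (A) = \operatorname{Aut}_A ({}_A A)$ acts, and I would deduce finiteness of the orbit set from representation-finiteness. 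The cleanest route is the well-known equivalence between representation-finiteness of $A$ and the finiteness of $\GL$-orbits on the module variety $\rep_A (d)$ for every $d$: the assignment $U \mapsto (U, {}_A A / U)$ relates $\operatorname{Aut}_A ({}_A A)$-orbits of submodules to $\GL$-orbits on module varieties, and the only extra data is the way $U$ and ${}_A A/U$ are glued to the \emph{fixed} middle term ${}_A A$. Showing that this gluing contributes only finitely much — i.e.\ that representation-finiteness really forces finitely many orbits of submodules — is where I expect the main obstacle of this implication to lie.

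For $(2) \Rightarrow (3)$, the key is the assignment $a \mapsto [A a] \in \calC (A)$. If $a' = u a v^{-1}$, then $A a' = (A a) v^{-1}$, so $[A a'] = [A a]$ and the assignment descends to a map from the set of $U (A)$-orbits in $A$ to $\calC (A)$. By (2) its image is finite, so it suffices to bound the fibres. Using the right action I would normalise each fibre to the set $G_I := \{ a : A a = I \}$ of generators of a fixed left ideal $I$, identify $a \in G_I$ with the surjection $A \to I$, $x \mapsto x a$, of left modules, and observe that the left action becomes precomposition with $\operatorname{Aut}_A ({}_A A)$ while the residual right action becomes postcomposition with automorphisms of $I$. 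Classifying these surjections then reduces to the $\operatorname{Aut}_A ({}_A A)$-conjugacy class of the kernel $\{ x : x a = 0 \}$, which again lands in the finite set $\calC (A)$, together with a torsor contribution from $\operatorname{Aut}_A (I)$. Controlling this last contribution — i.e.\ proving that $G_I$ meets only finitely many orbits — is the hard part, since not every automorphism of $I$ need be realised by a unit of $A$ stabilising $I$; I would resolve it by an orbit count on the constructible set $G_I$ for the connected group $U (A)$, ruling out a positive-dimensional family of orbits using the finiteness furnished by (2).

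For $(3) \Rightarrow (4)$, I would first observe that every two-sided ideal $I$ satisfies $u I v^{-1} \subseteq I$, so $I$ is a union of $U (A)$-orbits; a two-sided ideal is therefore determined by the set of orbits it contains, and (3) forces $\calI (A)$ to be \emph{finite}. It remains to show that a finite ideal lattice of a finite-dimensional algebra is distributive. The lattice $\calI (A)$ is modular, so by the standard characterisation it is distributive unless it contains a diamond $M_3$, that is, three ideals with equal pairwise meets $I$ and equal pairwise joins $J$. In that case, inside the $A$-$A$-bimodule $J / I$ two of the induced sub-bimodules are complementary and the third is the graph of a bimodule isomorphism $\phi$ between them. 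Since $k$ is algebraically closed, hence infinite, the graphs of $\lambda \phi$ for $\lambda \in k$ give infinitely many distinct ideals strictly between $I$ and $J$, contradicting the finiteness of $\calI (A)$. This implication I expect to be the most self-contained; the genuine difficulties sit in $(1) \Rightarrow (2)$ and, above all, in the fibre-finiteness step of $(2) \Rightarrow (3)$.
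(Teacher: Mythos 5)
A point of reference first: the paper does not prove this theorem at all; it quotes it as \cite{OkninskiRenner}*{Theorem~6}, adding only the remark that the implication $(3) \implies (4)$ holds because $k$, being algebraically closed, is infinite. So your proposal has to be judged against the cited source and standard arguments rather than against a proof in the paper. Your treatment of $(3) \implies (4)$ is complete and correct, and it is precisely the argument the paper's remark alludes to: two-sided ideals are unions of $U(A)$-orbits, so (3) forces $\calI (A)$ to be finite; $\calI (A)$ is modular, and a diamond $M_3$ would yield, via the graphs of the scalar multiples $\lambda \phi$ of a bimodule isomorphism, infinitely many ideals because $k$ is infinite.

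The other two implications, however, have genuine gaps: the steps you yourself flag as ``the main obstacle'' and ``the hard part'' are the entire content of those implications, and the reductions you set up do not resolve them. For $(1) \implies (2)$, no passage from representation-finiteness (finitely many indecomposables, or finitely many orbits on module varieties) to finiteness of orbits of submodules can work for an arbitrary ambient module: classifying pairs $U \subseteq M$ up to automorphisms of $M$ is a problem about the submodule category (modules over the lower triangular matrix algebra $T_2 (A)$), and $T_2 (A)$ can be representation infinite while $A$ is representation finite (e.g.\ $A = k[x]/(x^6)$). So a correct proof must use a special property of the ambient module ${}_A A$, and the missing idea is its \emph{projectivity}: if $P$ is projective and $f, g \colon P \to N$ are epimorphisms, then $g = f \alpha$ for some $\alpha \in \operatorname{Aut}_A (P)$ (factor both maps through a projective cover of $N$, split the resulting surjections between projectives, and apply Krull--Schmidt). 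Since $\operatorname{Aut}_A ({}_A A)$ consists exactly of the right multiplications by units, this lemma shows at once that $[I] \mapsto [A/I]$ is a bijection from $\calC (A)$ onto the set of isomorphism classes of cyclic left $A$-modules, which gives $(1) \implies (2)$ immediately; and, applied to the epimorphisms $A \to Aa$, $x \mapsto x a$, it shows that any two generators of a left ideal differ by left multiplication by a unit, so your map $[a] \mapsto [A a]$ is in fact \emph{injective} -- no fibre analysis is needed, and $(2) \implies (3)$ follows immediately. In particular, your worry about automorphisms of $I$ not being realised by units stabilising $I$ evaporates, while the proposed ``orbit count on the constructible set $G_I$'' is not an argument as it stands: connectedness of $U (A)$ and constructibility alone cannot exclude infinitely many orbits.
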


Recall that an algebra $A$ is of \emph{finite representation type}, if there are only finitely many indecomposable $A$-modules (up to isomorphism). We remark that the implication $(3) \implies (4)$ holds, since we are working over an algebraically closed, hence infinite, field.

In~\cite{OkninskiRenner}*{Section~3} the authors study the algebras with finitely many orbits, such that $\rad^2 (A) = 0$, where $\rad (A)$ is the Jacobson radical of $A$. We discuss now a connection of this case with quiver settings. Let $A$ be a finite dimensional algebra with $\rad^2 (A) = 0$. Put $\ol{A} := A / \rad (A)$. By Wedderburn--Artin Theorem there exist positive integers $n_1$, \ldots, $n_l$ such that
\[
\ol{A} \simeq \bbM_{n_1} (k) \times \cdots \times \bbM_{n_l} (k).
\]
Let $\ol{e}_1$, \ldots, $\ol{e}_l$ be the idempotents corresponding to this decomposition, and $e_1$, \ldots, $e_l$ their lifts. For $1 \leq i \leq l$, we put $\ol{A}_i := \ol{e}_i \ol{A} \ol{e}_i$.

We associate to $A$ a quiver setting $(\Delta_A, \bw_A)$ in the following way. The vertices of $\Delta_A$ are the pairs $(0, i)$, $(1, i)$, for $1 \leq i \leq l$. There are only arrows of the form $(1, j) \to (0, i)$ and the number of arrows from $(1, j)$ to $(0, i)$ equals the rank $r_{i, j}$ of $e_i \rad (A) e_j$ as an $\ol{A}_i$-$\ol{A}_j$-bimodule. Finally, $\bw_A (0, i) := n_i =: \bw_A (1, i)$, for each $1 \leq i \leq l$.

We have the following result.

\begin{prop} \label{prop orbits}
Let $A$ be a finite dimensional algebra with $\rad^2 (A) = 0$. Then $A$ has finitely many orbits if and only if $(\Delta_A, \bw_A)$ is a representation finite quiver setting. Thus $A$ has finitely many $U (A)$-orbits if and only if there is no subquiver $Q$ of $\Delta_A$ of Euclidean type such that $\bh_Q \leq \bw_A |_{Q_0}$.
\end{prop}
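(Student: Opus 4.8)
The second assertion is immediate from the first together with Theorem~\ref{theo main}: once we know that $A$ has finitely many $U(A)$-orbits if and only if $(\Delta_A, \bw_A)$ is representation finite, the characterization of representation infinite quiver settings rephrases the latter as the nonexistence of a Euclidean subquiver $Q$ of $\Delta_A$ with $\bh_Q \leq \bw_A |_{Q_0}$. So the whole content lies in the first equivalence, and the plan is to realize $U(A)$-orbits in $A$ as $\GL$-orbits in representation spaces of $\Delta_A$.

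First I would set up the structure theory. Since $\rad^2(A) = 0$, the Wedderburn--Malcev theorem splits $A = \ol{A} \oplus \rad(A)$ with $\ol{A} \simeq \prod_i \bbM_{n_i}(k)$ a subalgebra and $\rad(A)$ an $\ol{A}$-bimodule; decomposing $\rad(A) = \bigoplus_{i,j} e_i \rad(A) e_j$ and using that every $\ol{A}_i$-$\ol{A}_j$-bimodule is a direct sum of copies of the space of $n_i \times n_j$ matrices, we get $e_i \rad(A) e_j \cong (k^{n_i \times n_j})^{r_{i,j}}$. Correspondingly $U(A) = U(\ol{A}) \ltimes (1 + \rad(A))$, where $U(\ol{A}) = \prod_i \GL_{n_i}(k)$ and, because $\rad^2(A) = 0$, the unipotent normal subgroup $1 + \rad(A)$ acts trivially on the two-sided ideal $\rad(A)$ under $(u,v) \ast a = u a v^{-1}$ (one checks $(1+s) r (1-t) = r$ for $s, t, r \in \rad(A)$). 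Hence on $\rad(A)$ the action factors through $U(\ol{A}) \times U(\ol{A})$ and, block by block, is exactly left--right multiplication by $\prod_i \GL_{n_i}(k) \times \prod_j \GL_{n_j}(k)$ on $\bigoplus_{i,j}(k^{n_i \times n_j})^{r_{i,j}}$. This is precisely the $\GL(\bw_A)$-action on $\rep_{\Delta_A}(\bw_A)$, so $U(A)$-orbits in $\rad(A)$ correspond bijectively to isomorphism classes of representations in $\rep_{\Delta_A}(\bw_A)$.

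One implication is then free: $\rad(A)$ is a $U(A) \times U(A)$-invariant subset of $A$, so if $A$ has finitely many orbits then so does $\rad(A)$, i.e.\ $(\Delta_A, \bw_A)$ is representation finite. For the converse I would stratify $A$ by the equivariant projection $\pi \colon A \to \ol{A}$. The target has finitely many orbits (an orbit is determined by the ranks $\rho_i$ of the blocks, $0 \leq \rho_i \leq n_i$), so it suffices to bound the orbits over each stratum. Fixing an idempotent representative $e = \sum_i \varepsilon_i \in \ol{A}$ with $\varepsilon_i$ of rank $\rho_i$, the orbits over its stratum are the orbits of the stabilizer $G_e = \{(u,v) : \ol{u}\, e\, \ol{v}^{-1} = e\}$ on the coset $e + \rad(A)$.

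The technical heart, and the step I expect to be the main obstacle, is the computation of this stabilizer action. Writing units as $\ol{u}(1 + s)$ and $\ol{v}(1+t)$ and expanding $u(e + r)v^{-1}$ with $\rad^2(A) = 0$ collapses everything to $e + \ol{u}(r + se - et)\ol{v}^{-1}$; as $s, t$ range over $\rad(A)$ the correction $se - et$ sweeps out the $G_e$-invariant subspace $\rad(A)\, e + e\, \rad(A)$. Thus $G_e$-orbits on $e + \rad(A)$ are the orbits of $\{(\ol{u}, \ol{v}) : \ol{u}\, e\, \ol{v}^{-1} = e\}$ on the quotient $\rad(A)/(\rad(A)\, e + e\, \rad(A))$. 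Identifying $\varepsilon_i = \operatorname{diag}(I_{\rho_i}, 0)$ and reading off the block shapes, this quotient is $\bigoplus_{i,j}(k^{(n_i - \rho_i) \times (n_j - \rho_j)})^{r_{i,j}}$ and the residual group acts surjectively through the lower-right blocks as $\prod_i \GL_{n_i - \rho_i}(k) \times \prod_j \GL_{n_j - \rho_j}(k)$ --- that is, as $\GL(\bw')$ on $\rep_{\Delta_A}(\bw')$ for the dimension vector $\bw'$ with $\bw'(0,i) = \bw'(1,i) = n_i - \rho_i$. Since $\bw' \leq \bw_A$, Proposition~\ref{prop criterion} (or Theorem~\ref{theo main}) shows that representation finiteness of $(\Delta_A, \bw_A)$ is inherited by $(\Delta_A, \bw')$, so each of the finitely many strata carries finitely many orbits and $A$ has finitely many $U(A)$-orbits.
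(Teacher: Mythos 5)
Your proposal is correct, and its core identification of $\rad(A)$ with $\rep_{\Delta_A}(\bw_A)$ and of the $U(\ol{A})\times U(\ol{A})$-action with the $\GL(\bw_A)$-action is exactly the paper's; the difference is how the problem on all of $A$ is reduced to $\rad(A)$. The paper simply cites \cite{OkninskiRenner}*{Proposition~9} for the equivalence ``$A$ has finitely many $U(A)$-orbits $\iff$ $\rad(A)$ has finitely many $U(A)\times U(A)$-orbits'' (together with the easy observation that the unipotent part $1+\rad(A)$ acts trivially on $\rad(A)$), and is then done in a few lines. You instead prove the nontrivial direction of that reduction from scratch: you stratify $A$ by the equivariant projection to $\ol{A}$, note that $\ol{A}$ has finitely many orbits indexed by rank tuples, and compute that over the stratum of the idempotent $e$ the stabilizer action collapses, modulo the invariant subspace $\rad(A)e + e\rad(A)$, to the $\GL(\bw')$-action on $\rep_{\Delta_A}(\bw')$ with $\bw'(0,i)=\bw'(1,i)=n_i-\rho_i$. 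Your block computations here are correct (including the invariance of $\rad(A)e+e\rad(A)$ under the stabilizer and the surjectivity of the residual action on lower-right blocks). What your route buys is self-containedness and a finer picture --- the fibers over the rank strata are themselves quiver settings with smaller dimension vectors --- at the cost of one extra ingredient the paper's route never needs: the monotonicity of representation finiteness under $\bw' \leq \bw_A$, which you correctly extract from Proposition~\ref{prop criterion}. What the paper's route buys is brevity, since the stratification work is exactly what is outsourced to \cite{OkninskiRenner}.
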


\begin{proof}
Recall first from~\cite{OkninskiRenner}*{Proposition~9} that $A$ has finitely many $U (A)$-orbits if and only if there are finitely many $U (A) \times U (A)$-orbits in $\rad (A)$. Moreover, the $U (A) \times U (A)$-orbits in $\rad (A)$ coincide with the $U (\ol{A}) \times U (\ol{A})$-orbits in $\rad (A)$. Observe that $e_i \rad (A) e_j \simeq (\bbM_{n_i \times n_j} (k))^{r_{i, j}}$, thus
\[
\rad (A) = \prod_{1 \leq i, j \leq l} (\bbM_{n_i \times n_j} (k))^{r_{i, j}} = \rep_{\Delta_A} (\bw_A).
\]
Moreover,
\[
U (\ol{A}) \times U (\ol{A}) = \prod_{i = 1}^n \GL_{n_i} (k) \times \prod_{i = 1}^n \GL_{n_i} (k) = \GL (\bw_A),
\]
and the action of $U (\ol{A}) \times U (\ol{A})$ on $\rad (A)$ coincides with the action of $\GL (\bw_A)$ on $\rep_{\Delta_A} (\bw_A)$. This finishes the proof of the former statement. The latter one follows from the former one and Theorem~\ref{theo main}.
\end{proof}

Note that if $(\Delta, \bw) = (\Delta_A, \bw_A)$, for an algebra $A$, then we have the following:
\begin{enumerate}

\item
the vertex set $\Delta_0$ is a disjoint union of two subsets $\Delta_0'$ and $\Delta_0''$ such that there is a bijection $\delta \colon \Delta_0'' \to \Delta_0'$;

\item
every arrow starts in $\Delta_0''$ and terminates in $\Delta_0'$;

\item
$\bw (\delta y) = \bw (y)$, for each $x \in \Delta_0''$.

\end{enumerate}
Observe that every setting $(\Delta, \bw)$ with the above properties is (up to isomorphism) of the form $(\Delta_A, \bw_A)$, for some algebra $A$. Namely, as a vector space $A$ equals $\prod_{x \in \Delta_0'} \bbM_{\bw (x)} (k) \times \prod_{\substack{x \in \Delta_0' \\ y \in \Delta_0''}} (\bbM_{\bw (x) \times \bw (y)} (k))^{r_{x, y}}$, where $r_{x, y}$ is the number of arrows from $y$ to $x$, and the multiplication is given by
\begin{multline*}
((M_x)_{x \in \Delta_0'}, (M_{x, y, i})_{\substack{x \in \Delta_0' \\ y \in \Delta_0'' \\ 1 \leq i \leq r_{x, y}}}) ((N_x)_{x \in \Delta_0'}, (N_{x, y, i})_{\substack{x \in \Delta_0' \\ y \in \Delta_0'' \\ 1 \leq i \leq r_{x, y}}})
\\
:= ((M_x N_x)_{x \in \Delta_0'}, (M_x N_{x, y, i} + M_{x, y, i} N_{\delta (y)})_{\substack{x \in \Delta_0' \\ y \in \Delta_0'' \\ 1 \leq i \leq r_{x, y}}}).
\end{multline*}

We compare now Proposition~\ref{prop orbits} with~\cite{OkninskiRenner}*{Theorem~10}. First observe that the authors of~\cite{OkninskiRenner} assume that the lattice $\calI (A)$ is distributive. This assumption is justified by the implication $(3) \implies (4)$ of Theorem~\ref{theoOR}. Using~\cite{Camillo}*{Theorem~1} this is equivalent to $r_{i, j} \leq 1$, for all $i, j$. In other words, it means there are no multiple arrows in $\Delta_A$. Recall, that if there are multiple arrows in $\Delta_A$, then one easily finds a subquiver $Q$ of $\Delta_A$ of type $\tilde{\bbA}_1$ such that $\bh_Q \leq \bw_A |_{Q_0}$.

There are two (equivalent) sets of conditions in~\cite{OkninskiRenner}*{Theorem~10}. We concentrate on the one which is easier to formulate. Namely, we have the following set of conditions:
\begin{enumerate}

\item
there are no cycles in $\Delta_A$;

\item
if $\bw_A (x) \geq 2$, then there at most three arrows starting and at most three arrows terminating at $x$;

\item
if $\bw_A (s \alpha) \geq 2$ and $\bw_A (t \alpha) \geq 2$, for an arrow $\alpha$ of $\Delta_A$, then the number of arrows starting at $s \alpha$ plus the number of arrows terminating at $t \alpha$ is at most $4$.

\end{enumerate}
Obviously, the first condition means that $(Q, \bh_Q) \not \leq (\Delta_A, \bw_A)$, where $Q$ is a quiver of type $\tilde{\bbA}$. In the same way, the second condition excludes the settings $(Q, \bh_Q)$, where $Q$ is a quiver of type $\tilde{\bbD}_4$. Finally, the last condition excludes (up to duality) the settings $(Q, \bd)$, where $Q$ is the quiver
\[
\vcenter{\xymatrix@R=1em{\bullet \ar[d] \ar[rd] & \bullet \ar[d] & \bullet \ar[ld] \\ \bullet & \bullet}}
\]
and $\bd :=
\begin{smallmatrix}
2 & 1 & 1 \\ 1 & 2
\end{smallmatrix}$.
However, the above setting is representation finite. This means that unfortunately~\cite{OkninskiRenner}*{Theorem~10} is false. Note that, for $(Q, \bd)$ as above, $(Q, \bd) \leq (Q', \bh_{Q'})$, where $Q'$ is a Euclidean quiver of one of the types $\tilde{\bbD}_n$, $n > 4$, $\tilde{\bbE}_6$, $\tilde{\bbE}_7$, $\tilde{\bbE}_8$, thus the conditions in~\cite{OkninskiRenner}*{Theorem~10} are sufficient for $A$ to have finitely many $U (A)$-orbits (but not necessary as pointed out above).

\bibsection

\begin{biblist}

\bib{AssemSimsonSkowronski}{book}{
   author={Assem, I.},
   author={Simson, D.},
   author={Skowro\'{n}ski, A.},
   title={Elements of the Representation Theory of Associative Algebras. Vol. 1},
   series={London Math. Soc. Stud. Texts},
   volume={65},
   publisher={Cambridge Univ. Press, Cambridge},
   date={2006},
   pages={x+458},
}

\bib{Bautista}{article}{
   author={Bautista, R.},
   title={On algebras of strongly unbounded representation type},
   journal={Comment. Math. Helv.},
   volume={60},
   date={1985},
   pages={392--399},
}

\bib{Bocklandt}{article}{
   author={Bocklandt, R.},
   title={Symmetric quiver settings with a regular ring of invariants},
   journal={Linear Algebra Appl.},
   volume={365},
   date={2003},
   pages={25--43},
}

\bib{Camillo}{article}{
   author={Camillo, V.},
   title={Distributive modules},
   journal={J. Algebra},
   volume={36},
   date={1975},
   pages={16--25},
}

\bib{Gabriel}{article}{
   author={Gabriel, P.},
   title={Unzerlegbare Darstellungen. I},
   journal={Manuscripta Math.},
   volume={6},
   date={1972},
   pages={71--103; correction, ibid. \textbf{6} (1972), 309},
}

\bib{Kac}{book}{
   author={Kac, V. G.},
   title={Infinite-dimensional Lie Algebras},
   edition={3},
   publisher={Cambridge Univ. Press, Cambridge},
   date={1990},
   pages={xxii+400},
}

\bib{OkninskiRenner}{article}{
   author={Okni\'{n}ski, J.},
   author={Renner, L. E.},
   title={Algebras with finitely many orbits},
   journal={J. Algebra},
   volume={264},
   date={2003},
   pages={479--495},
}

\bib{Ringel}{book}{
   author={Ringel, C. M.},
   title={Tame Algebras and Integral Quadratic Forms},
   series={Lecture Notes in Math. },
   volume={1099},
   publisher={Springer, Berlin},
   date={1984},
   pages={xiii+376},
}

\bib{Schofield}{article}{
   author={Schofield, A.},
   title={Semi-invariants of quivers},
   journal={J. London Math. Soc. (2)},
   volume={43},
   date={1991},
   number={3},
   pages={385--395},
}

\bib{SimsonSkowronski}{book}{
   author={Simson, D.},
   author={Skowro\'{n}ski, A.},
   title={Elements of the Representation Theory of Associative Algebras. Vol. 2},
   series={London Math. Soc. Stud. Texts},
   volume={71},
   publisher={Cambridge Univ. Press, Cambridge},
   date={2007},
   pages={xii+308},
}

\bib{SkowronskiZwara}{article}{
   author={Skowro\'{n}ski, A.},
   author={Zwara, G.},
   title={Degenerations in module varieties with finitely many orbits},
   book={
      title={Trends in the Representation Theory of Finite-dimensional Algebras},
      series={Contemp. Math.},
      volume={229},
      editor={Green, E. L.},
      editor={Huisgen-Zimmermann, B.},
      publisher={Amer. Math. Soc., Providence, RI},
      date={1998},
   },
   pages={343--356},
}

\end{biblist}

\end{document}